\documentclass[12pt,reqno]{amsart}

\usepackage{amsthm,amssymb,amstext,amscd,euscript,mathrsfs,amsfonts,amsbsy,amsxtra,latexsym,amsmath}
\usepackage{fullpage}
\usepackage[english]{babel}
\usepackage[latin1]{inputenc}
\usepackage{verbatim}
\usepackage{graphicx} 
\usepackage[all]{xy} 
\usepackage{enumitem}
\usepackage{bbm}
\numberwithin{figure}{section} 
\allowdisplaybreaks

\numberwithin{equation}{section}
\newtheorem{theorem}{\textbf{Theorem}}
\numberwithin{theorem}{section}

\newtheorem{lemma}[theorem]{\textbf{Lemma}}

\theoremstyle{definition}
\newtheorem{definition}[theorem]{Definition}
\newtheorem{remark}{Remark}[section]

\newcommand{\bea}{\begin{eqnarray}} 
\newcommand{\eea}{\end{eqnarray}} 
\newcommand{\be}{\begin{equation}} 
\newcommand{\ee}{\end{equation}} 
\newcommand{\benn}{\begin{equation*}} 
\newcommand{\eenn}{\end{equation*}}

\title[short title]{On a relative form of Verdier specialization}
\author{James Fullwood and Dongxu Wang}
\address{Department of Mathematics\\ University of Hong Kong\\ Pokfulam Road, Hong Kong.}
\email{fullwood@maths.hku.hk}
\address{Department of Mathematics\\ Dongbei University of Finance and Economics\\ 217 Jianshan St, Shahekou District, Dalian, Liaoning, China.}
\email{dxwang1981@gmail.com}
\begin{document}

\maketitle
\begin{center}
\emph{Dedicated to Henry Laufer on his 70th birthday}
\end{center}

\begin{abstract}
We prove a relative form of Verdier's specialization formula, and apply it to derive a Chern class identity predicted by string dualities. 
\end{abstract}

\section{Introduction}
Let $\mathscr{Y}\to \Delta$ be a family over a disk about the origin in $\mathbb{C}$ such that the total space of the family is a topological locally trivial fibration over $\Delta \setminus 0$. Denote a fiber over $t\neq 0$ by $Y_t$ and the central fiber by $Y_0$. In \cite{VS}, Verdier defines specialization morphisms
\[
\sigma_H:H_*(Y_t)\to H_*(Y_0), \quad \quad \sigma_F:F(\mathscr{Y})\to F(Y_0), 
\]
where $H_*$ denotes the integral homology functor and $F$ denotes the functor that takes a variety to its group of constructible functions. These morphisms are functorial in the sense that they commute with taking Chern classes, as Verdier proves that for any constructible function $\vartheta \in F(\mathscr{Y})$ and $t$ sufficiently small
\begin{equation}\label{VF}
\sigma_Hc_{\text{SM}}(\left.\vartheta\right|_{Y_t})=c_{\text{SM}}(\sigma_F(\vartheta)),
\end{equation}
where $c_{\text{SM}}(\cdot)$ denotes the morphism which takes a constructible function to its \emph{Chern-Schwartz-MacPherson class}. Chern-Schwartz-MacPherson classes evaluated at the constant function 1 yield a generalization of the total homological Chern class to possibly singular varieties which extend the Gau{\ss}-Bonnet-Chern theorem to the singular setting, since for $X$ possibly singular we have
\[
\int_X c_{\text{SM}}(\mathbbm{1}_X)=\chi(X),
\] 
where $\mathbbm{1}_X$ denotes the indicator function of $X$ (i.e., the constant function 1) and $\chi(X)$ its topological Euler characteristic with compact support. For the indicator function $\mathbbm{1}_{\mathscr{Y}}$ of the total space of the family $\mathscr{Y}\to \Delta$, Verdier's specialization $\sigma_F \mathbbm{1}_{\mathscr{Y}}\in F(Y_0)$ to the central fiber is such that 
\[
\int_{Y_0}c_{\text{SM}}(\sigma_F \mathbbm{1}_{\mathscr{Y}})=\chi(Y_t),
\]
and thus yields a deformation invariant generalization of Euler characteristic to the realm of singular varieties. In the case that the central fiber $Y_0$ is regularly embedded the class $c_{\text{SM}}(\sigma_F \mathbbm{1}_{\mathscr{Y}})\in H_*Y_0$ coincides with a characteristic class for singular varieties and schemes referred to as the \emph{Chern-Fulton class} (\cite{IT}, \S~4.2.3), in which case we have
\[
c_{\text{SM}}(\sigma_F \mathbbm{1}_{\mathscr{Y}})=c(T_{\text{vir}}Y_0)\cap [Y_0]\in H_*Y_0,
\]
where $T_{\text{vir}}Y_0$ denotes the virtual tangent bundle of $Y_0$, i.e., $\left.TM\right|_{Y_0}-N_{Y_0}M$. In this note, we fix a smooth complex base variety $B$ and consider the case where all fibers of the family $\mathscr{Y}\to \Delta$ come equipped with a morphism $\varphi_t:Y_t\to B$, and prove a relative version of formula (\ref{VF}) for the case $\vartheta=\mathbbm{1}_{\mathscr{Y}}$. In particular, we fix a proper morphism $f:X\to B$ of smooth complex varieties, a vector bundle $\mathscr{E}\to X$, and let 
\[
s:X\times \Delta\to \mathscr{E}
\]
be a regular morphism such that $s_t=s(\cdot,t)$ is a generic section of $\mathscr{E}\to X$ for all $t\in \Delta\setminus 0$, with $s_0$ possibly non-generic. Considering the zero-schemes of the sections $s_t$ as $t$ varies in $\Delta$ gives rise to a family $\mathscr{Y}\to \Delta$, such that each fiber is a closed subscheme of $X$ regularly embedded by the natural inclusion $Y_t\overset{\iota_t}\hookrightarrow X$, whose normal bundle is $\iota_t^*\mathscr{E}$. By composing the natural inclusion associated with each fiber with the morphism $f$ yields proper morphisms 
\[
\varphi_t=f\circ \iota_t:Y_t\to B
\]  
for all $t\in \Delta$. Our result is then given by the following
\begin{theorem}\label{mt}
With notations as given above
\begin{equation}\label{mf}
c_{\emph{SM}}\left(\left.{\varphi_t}_*\mathbbm{1}_{\mathscr{Y}}\right|_{Y_t}\right)=c_{\emph{SM}}\left({\varphi_0}_*\sigma_F \mathbbm{1}_{\mathscr{Y}}\right)\in H_*B,
\end{equation}
where ${\varphi_t}_*$ and ${\varphi_0}_*$ denote the proper pushforward of constructible functions associated with the morphisms $\varphi_t$ and $\varphi_0$ respectively.
\end{theorem}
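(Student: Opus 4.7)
The plan combines Verdier's formula (\ref{VF}) with MacPherson's functoriality of $c_{\text{SM}}$ under proper pushforward, reducing the statement to a naturality of Verdier's specialization with respect to proper pushforward. First, since $f$ is proper, so are $\varphi_t$ and $\varphi_0$; applying MacPherson's functoriality on both sides of the desired identity gives
\begin{equation*}
c_{\text{SM}}\!\left(\varphi_{t*}\mathbbm{1}_{Y_t}\right) = \varphi_{t*}\,c_{\text{SM}}(\mathbbm{1}_{Y_t}),\qquad c_{\text{SM}}\!\left(\varphi_{0*}\sigma_F\mathbbm{1}_{\mathscr{Y}}\right) = \varphi_{0*}\,c_{\text{SM}}(\sigma_F\mathbbm{1}_{\mathscr{Y}}).
\end{equation*}
Substituting $\vartheta=\mathbbm{1}_{\mathscr{Y}}$ into Verdier's formula (\ref{VF}) converts the second right-hand side into $\varphi_{0*}\,\sigma_H\,c_{\text{SM}}(\mathbbm{1}_{Y_t})$, so the theorem reduces to establishing
\begin{equation*}
\varphi_{t*}\,c_{\text{SM}}(\mathbbm{1}_{Y_t}) = \varphi_{0*}\,\sigma_H\,c_{\text{SM}}(\mathbbm{1}_{Y_t})\in H_*B.
\end{equation*}

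I would then prove the stronger statement $\varphi_{t*}=\varphi_{0*}\circ\sigma_H$ as homomorphisms $H_*(Y_t)\to H_*(B)$. Although $\mathscr{Y}$ itself does not admit a proper map to $B$, pairing the composition $\Phi:\mathscr{Y}\hookrightarrow X\times\Delta\to X\xrightarrow{f}B$ with the family projection $\pi:\mathscr{Y}\to\Delta$ produces a proper morphism $(\Phi,\pi):\mathscr{Y}\to B\times\Delta$ over $\Delta$ whose fiber over $t$ is exactly $\varphi_t$. Since $B\times\Delta\to\Delta$ is the trivial family, its associated specialization $\sigma_H^{B\times\Delta}:H_*(B\times\{t\})\to H_*(B\times\{0\})$ is the canonical identification, so the naturality of $\sigma_H$ under proper pushforward forces
\begin{equation*}
\varphi_{0*}\circ\sigma_H = \sigma_H^{B\times\Delta}\circ\varphi_{t*} = \varphi_{t*},
\end{equation*}
as required.

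The key technical input, and the main obstacle, is the compatibility of Verdier's specialization with proper pushforward. This compatibility is a homological shadow of the proper base change isomorphism $(\Phi,\pi)_*\Psi\simeq\Psi(\Phi,\pi)_*$ for Deligne's nearby cycles functor $\Psi$, which holds because $(\Phi,\pi)$ is proper. If one wishes to avoid nearby cycle machinery, the same theorem can be reached by first establishing the sharper pointwise identity $\varphi_{t*}\mathbbm{1}_{Y_t}=\varphi_{0*}\sigma_F\mathbbm{1}_{\mathscr{Y}}$ in $F(B)$ and then applying $c_{\text{SM}}$; evaluated at $b\in B$ this reads $\chi(\varphi_t^{-1}(b))=\int_{\varphi_0^{-1}(b)}\sigma_F\mathbbm{1}_{\mathscr{Y}}$, which follows from the compatibility of $\sigma_F$ with restriction to the closed subfamily $\mathscr{Y}\cap(f^{-1}(b)\times\Delta)\to\Delta$ together with Verdier's global formula $\int\sigma_F\mathbbm{1}=\chi(\text{nearby fiber})$ applied to that restriction. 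Either route makes the compatibility of Verdier's constructions with proper maps the substantive content of the argument.
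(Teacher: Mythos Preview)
Your argument is correct, but it proceeds along a genuinely different axis from the paper's proof. The paper never invokes naturality of $\sigma_H$ (or $\sigma_F$) under proper pushforward. Instead it computes both sides explicitly using the vector-bundle hypothesis: by a cited result it identifies $c_{\text{SM}}(\sigma_F\mathbbm{1}_{\mathscr{Y}})$ with the Chern--Fulton class $\dfrac{\iota_0^*c(TX)}{\iota_0^*c(\mathscr{E})}\cap[Y_0]$, then pushes forward to $X$ via the projection formula, uses the equality $\iota_{0*}[Y_0]=\iota_{t*}[Y_t]$ in $H_*X$ (both being zero loci of sections of the same bundle $\mathscr{E}$), and finally pushes forward to $B$ by $f$. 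So the paper's proof is essentially an explicit Chern-class manipulation that hinges on the regular-section hypothesis.

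Your route, by contrast, is structural: you reduce everything to the commutativity $\varphi_{0*}\circ\sigma_H=\sigma_H^{B\times\Delta}\circ\varphi_{t*}$ for the proper map $(\Phi,\pi):\mathscr{Y}\to B\times\Delta$, which is a consequence of proper base change for nearby cycles. This makes no use of the bundle $\mathscr{E}$ at all, so it actually proves a more general statement---any family $\mathscr{Y}\to\Delta$ admitting a proper map to $B\times\Delta$ over $\Delta$ would satisfy (\ref{mf}). The trade-off is that you import a heavier piece of machinery (functoriality of Verdier specialization), whereas the paper stays within elementary intersection theory and the projection formula. Both are valid; yours is more conceptual and more general, the paper's is more self-contained and makes the role of the specific hypotheses transparent.
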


Explicitly computing both sides of equation (\ref{mf}) for a given family $\mathscr{Y}\to \Delta$ subject to the assumptions above then yields a non-trival Chern class identity between seemingly unrelated varieties, as the constructible functions $\left.{\varphi_t}_*\mathbbm{1}_{\mathscr{Y}}\right|_{Y_t}$ and ${\varphi_0}_*\sigma_F \mathbbm{1}_{\mathscr{Y}}$ will in general be quite different. We were motivated to derive such a formula in our efforts to yield a purely mathematical explanation for the existence of certain Chern class identities predicted by string dualities in physics. In particular, a regime of string theory referred to as `F-theory' models the purported compactified dimensions of spacetime by an elliptic fourfold $Y\to B$ (with $B$ a Fano threefold), and the preservation of a quantity referred to as `D3 charge' under S-duality with a weakly coupled type-IIB orientifold theory predicts a relation of the form
\[
\chi(Y)=\sum_i \chi(D_i),
\] 
where $D_i$ are hypersurfaces in $B$ which support D7-branes in the type-IIB theory. Such relations --referred to as tadpole relations-- provide a consistency check between the two theories, as well as yield non-trivial mathematical identities among Euler characteristics of the varieties $Y$ and the $D_i$. Moreover, it has been shown that these tadpole relations coming from physics are degree-zero terms of Chern class identities which hold in a much broader context than their physical origins \cite{AE1}\cite{AE2}\cite{EFY}\cite{EJY}. In particular, the Chern class identities which yield the tadpole relations in physics hold for elliptic fibrations $\varphi:Y\to B$ with $B$ of arbitrary dimension and $Y$ not necessarily Calabi-Yau. In \cite{TR2}, it was shown by deriving a special case of Theorem~\ref{mt} that the Chern class identities predicted by physics are manifestations of a relative form of Verdier specialization, thus providing a purely mathematical explanation for the existence of such identities.   

In what follows, we review the theory of Chern-Schwartz-MacPherson classes of constructible functions, prove Theorem~\ref{mt}, and then use Theorem~\ref{mt} to derive a non-trivial Chern class identity predicted by string dualities. For those acquainted with the physical significance of such identities, we close with a proposed general definition of `orientifold Euler characteristic', which captures the contribution of a singular brane to the D3 charge.   

\section{Chern-Schwartz-MacPherson classes and a proof of Theorem~\ref{mt}}
Let $X$ be a complex variety. A constructible function on $X$ is an integer-valued function of the form
\[
\sum_i a_i\mathbbm{1}_{W_i},
\]
with each $a_i\in \mathbb{Z}$, $W_i\subset X$ a closed subvariety and $\mathbbm{1}_{W_i}$ the function that evaluates to $1$ for points inside of $W_i$ and is zero elsewhere. The collection of all such functions forms an abelian group under addition, and is referred to as the \emph{group of constructible functions} on $X$, denoted $F(X)$. A proper morphism $f:X\to Y$ induces a functorial group homomorphism $f_*:F(X)\to F(Y)$, which by linearity is determined by the prescription
\begin{equation}\label{cfd}
f_*\mathbbm{1}_W(p)=\chi\left(f^{-1}(p)\cap W\right),
\end{equation}
where $W\subset X$ is a closed subvariety and $\chi$ denotes topological Euler characteristic with compact support. By taking $F(f)=f_*$, we may view $F$ as a covariant functor from varieties to abelian groups. Now denote by $H_*$ homology functor, which takes a variety to its integral homology. Both the the constructible function functor and the homology functor are covariant with respect to proper maps. In the 1960s Deligne and Grothendieck conjectured the existence of a unique natural transformation
\[
c_*:F\to H_*
\]
such that for $X$ smooth
\[
c_*(\mathbbm{1}_X)=c(TX)\cap [X]\in H_*X,
\]
i.e., the total homological Chern class of $X$. For $X$ possibly singular the class $c_*(\mathbbm{1}_X)$ would then be a generalization of Chern class to the realm of singular varieties. Moreover, functoriality would then imply
\begin{equation}\label{ecf}
\int_X c_*(\mathbbm{1}_X)=\chi(X),
\end{equation}
so that such a class would provide a natural generalization of the Gau{\ss}-Bonnet-Chern theorem to the singular setting. In 1974, Deligne and Grothendieck's conjecture was proved by Robert MacPherson \cite{RMCC}, and as such the class $c_*(\mathbbm{1}_X)$ became known as MacPherson's Chern class. It was shown later by Brasselet and Schwartz that MacPherson's Chern class was in fact the Alexander dual in relative cohomology of singular Chern classes constructed by Marie-H\'{e}l\`{e}ne Schwartz in the 1960s using radial vector fields \cite{BSCC}\cite{MHSCC}, thus the class $c_*(\mathbbm{1}_X)$ is referred to at present as the \emph{Chern-Schwartz-MacPherson class}, and is denoted by $c_{\text{SM}}(X)$. The class $c_*(\delta)$ of a general constructible function $\delta$ will be denoted from here on by $c_{\text{SM}}(\delta)$.

The proof of Theorem~\ref{mt} essentially follows from the functoriality of Chern-Schwartz-MacPherson classes and elementary manipulations, but before doing so we recall our assumptions. So let $B$ be a smooth complex variety, $f:X\to B$ be a proper morphism with $X$ smooth, and let $\mathscr{E}\to X$ be a vector bundle. Denote by $\Delta$ a disk about the origin in $\mathbb{C}$. We assume 
\[
s:X\times \Delta\to \mathscr{E}
\]   
is a regular morphism such that $s_t=s(\cdot,t)$ is a generic section of $\mathscr{E}\to X$ for $t\in \Delta\setminus 0$, with $s_0$ a possibly non-generic section. The zero-schemes of $s_t$ as $t$ varies in $\Delta$ then gives rise to a family $\mathscr{Y}\to \Delta$, such that the fibers $Y_t$ come equipped with morphisms $\varphi_t:Y_t\to B$, where $\varphi_t=f\circ \iota_t$ with $\iota_t:Y_t\hookrightarrow X$ the natural inclusion. In order to apply Verdier's specialization morhisms we assume that the family is a topological locally trivial fibration over $\Delta\setminus 0$. We will hold off giving a precise definition of Verdier's specialization until \S\ref{s3} as it is not needed for the proof of Theorem~\ref{mt} modulo the acceptance of formula (\ref{VF}).

\begin{proof}[Proof of Theorem~\ref{mt}] 
We recall that our goal is to show
\[
c_{\text{SM}}\left(\left.{\varphi_t}_*\mathbbm{1}_{\mathscr{Y}}\right|_{Y_t}\right)=c_{\text{SM}}\left({\varphi_0}_*\sigma_F \mathbbm{1}_{\mathscr{Y}}\right).
\]
For this, we first note that it follows from the proof of Theorem~5.3 in \cite{PPCC} that for $t\neq 0$ we have
\[
\sigma_Hc_{\text{SM}}(Y_t)=\frac{\iota_0^*c(TX)}{\iota_0^*c(\mathscr{E})}\cap [Y_0],
\]
thus by formula (\ref{VF}) we have 
\begin{eqnarray*}
c_{\text{SM}}(\sigma_F\mathbbm{1}_{\mathscr{Y}})&=&\sigma_Hc_{\text{SM}}(\left.\mathbbm{1}_{\mathscr{Y}}\right|_{Y_t}) \\
                                                &=&\sigma_Hc_{\text{SM}}(\mathbbm{1}_{Y_t}) \\
																								&=&\sigma_Hc_{\text{SM}}(Y_t) \\
																								&=&\frac{\iota_0^*c(TX)}{\iota_0^*c(\mathscr{E})}\cap [Y_0].
\end{eqnarray*}
We then have
\begin{eqnarray*}
c_{\text{SM}}\left({\varphi_0}_*\sigma_F \mathbbm{1}_{\mathscr{Y}}\right)
&=&{\varphi_0}_*c_{\text{SM}}(\sigma_F\mathbbm{1}_{\mathscr{Y}}) \\
&=&{\varphi_0}_*\left(\frac{\iota_0^*c(TX)}{\iota_0^*c(\mathscr{E})}\cap [Y_0]\right) \\
&=&f_*\circ {\iota_0}_*\left(\frac{\iota_0^*c(TX)}{\iota_0^*c(\mathscr{E})}\cap [Y_0]\right) \\
&=&f_*\left(\frac{c(TX)}{c(\mathscr{E})}\cap {\iota_0}_*[Y_0]\right) \\
&=&f_*\left(\frac{c(TX)}{c(\mathscr{E})}\cap {\iota_t}_*[Y_t]\right) \\
&=&f_*\circ {\iota_t}_*\left(\frac{\iota_t^*c(TX)}{\iota_t^*c(\mathscr{E})}\cap [Y_t]\right) \\
&=&{\varphi_t}_*c_{\text{SM}}(\mathbbm{1}_{Y_t}) \\
&=&{\varphi_t}_*c_{\text{SM}}(\left.\mathbbm{1}_{\mathscr{Y}}\right|_{Y_t}) \\
&=&c_{\text{SM}}\left({\varphi_t}_*\left.\mathbbm{1}_{\mathscr{Y}}\right|_{Y_t}\right), \\
\end{eqnarray*}
as desired, where in the first and final equalities we use functoriality of $c_{\text{SM}}$, the second equality follows from our first chain of equalities, the third and seventh by functoriality of proper pushforward, the fourth and sixth by the projection formula, and the fifth since $Y_t$ and $Y_0$ are both sections of $\mathscr{E}\to X$. 
\end{proof}

\section{Derivation of a Chern class identity predicted by F-theory/type-IIB duality}\label{s3}
The geometric apparatus of a regime of string theory referred to as F-theory is an elliptic fibration over a Fano threefold $\varphi:Y\to B$, whose total space $Y$ --which plays the r\^{o}le of the compactified dimensions of spacetime-- is a Calabi-Yau fourfold. A certain form of S-duality then identifies F-theory with a weakly coupled orientifold type-IIB theory, which is realized geometrically by a singular degeneration (satisfying certain conditions coming from physics), not only of the total space $Y$, but of the \emph{fibration} $\varphi:Y\to B$, to a fibration $\varphi_0:Y_0\to B$ where all the fibers are singular degenerations of elliptic curves. The duality is then captured by the corresponding family over a disk
\[
\mathscr{Y}\to \Delta,
\] 
such that $Y=Y_{t_0}$ for some $t_0\in \Delta$ and each fiber $Y_t$ comes equipped with a morphism $\varphi_t:Y_t\to B$ for all $t\in \Delta$. Such a family is often referred to as a `weak coupling limit' of F-theory. There are certain divisors $D_i$ in $B$ associated with the central fiber of the family $Y_0$ whose Euler characteristics represent in the type-IIB theory what physicists refer to as `D3 charge'. Since on the F-theory side the total D3 charge is given by the Euler characteristic of $Y$, the preservation of D3 charge under S-duality then leads one who is confident in the theory to predict the relation
\begin{equation}\label{tr1}
\chi(Y)\overset{?}=\sum_i \chi(D_i),
\end{equation}   
which should necessarily hold if indeed F-theory and type-IIB are equivalent descriptions of nature. Such relations which equate D3 charge in dual theories are often referred to as \emph{tadpole relations}. While in the physics literature tadpole relations are verified by explicitly computing both sides, there is no reason a priori from a purely mathematical standpoint why such relations should hold. In \cite{TR2} however, it was observed that such identities are basically degree-zero manifestations of a special case of Theorem~\ref{mt}, and thus could be \emph{derived} from solely mathematical principles without having to explicitly compute any Euler characteristics of $Y$ or the $D_i$.  

We now illustrate this phenomenon by considering an explicit weak coupling limit of F-theory first constructed in \cite{EJY}, and then deriving the Chern class identity which encodes the associated tadpole relation in degree zero via Theorem~\ref{mt}. In particular, we consider the elliptic fibration referred to in \cite{EJY} as the \emph{$Q_7$ fibration}, as it is constructed from an elliptic curve whose defining polynomial admits a Newton polygon which is a reflexive quadrilateral with seven lattice points on its boundary. 

So let $B$ be a smooth compact variety over $\mathbb{C}$ endowed with an ample line bundle $\mathscr{L}\to B$, and let $\mathscr{E}\to B$ be the vector bundle given by
\[
\mathscr{E}=\mathscr{O}_B\oplus \mathscr{O}_B\oplus \mathscr{L}.
\] 
Denote by $\pi:\mathbb{P}(\mathscr{E})\to B$ the projective bundle of \emph{lines} in $\mathscr{E}$. We then consider the \emph{$Q_7$ fibration} (which was first constructed in \cite{CCG}), which is a hypersurface in the $\mathbb{P}^2$-bundle $\mathbb{P}(\mathscr{E})$ given by
\[
Y:\left(yx^2-e_1y^3+e_2y^2z+e_3xz^2+e_4yz^2+e_5z^3=0\right)\subset \mathbb{P}(\mathscr{E}),
\]
where $e_i$ is a general section of $\pi^*\mathscr{L}^2$ for $i\neq 3$ and $e_3$ is a general section of $\pi^*\mathscr{L}$. With these prescriptions $Y$ is then the zero-scheme associated with a section of $\mathscr{O}_{\mathbb{P}(\mathscr{E})}(3)\otimes \pi^*\mathscr{L}^2$. Composing the natural inclusion $i:Y\hookrightarrow \mathbb{P}(\mathscr{E})$ with the bundle projection $\pi:\mathbb{P}(\mathscr{E})\to B$ then endows $Y$ with the structure of an elliptic fibration $\varphi=\pi\circ i:Y\to B$. In the case that $B$ is Fano we may take $\mathscr{L}$ to be the anticanonical bundle $\mathscr{O}(-K_B)\to B$ and then a straghtforward calculation using adjunction shows that in such a case $Y$ is in fact Calabi-Yau. If we further restrict the base to be a Fano \emph{threefold} then we are in the context of the physical setting but we require no such assumptions.

A weak coupling limit of F-theory associated with the $Q_7$ fibration was then constructed in \cite{EJY} by deforming the coefficient sections $e_i$ in terms of a deformation parameter $t\in \Delta$ which gives rise to a family $\mathscr{Y}\to \Delta$, in such a way that $\mathscr{Y}$ is given by
\[
\mathscr{Y}:\left(yx^2-\beta y^3+2\vartheta y^2z+t^2\rho xz^2+hyz^2+t\iota z^3=0\right)\subset \mathbb{P}(\mathscr{E})\times \Delta,
\]  
where to emphasize the distinction between $Y$ and $\mathscr{Y}$ we use the notation
\[
e_1=\beta, \quad e_2=2\vartheta, \quad e_3=t^2\rho, \quad e_4=h, \quad e_5=t\iota.
\]
The central fiber is then given by 
\[
Y_0:\left(y(x^2-\beta y^2+2\vartheta yz+hz^2)=0\right)\subset \mathbb{P}(\mathscr{E}).
\]
For every $t\neq 0$ there exists an associated discriminant $\mathscr{D}_t\subset B$ over which the singular fibers of $Y_t$ reside. The flat limit of such discriminants as $t\to 0$ will then be denoted by $\mathscr{D}_0$, and will be referred to as the \emph{limiting discriminant} associated with the family $\mathscr{Y}\to \Delta$. The D3 charge on the type-IIB side is then given by the sum of the Euler characteristics of the components of the limiting discriminant (taken with multiplicities and certain contributions from singularities). In the context at hand the associated limiting discriminant $\mathscr{D}_0$ is given by
\[
\mathscr{D}_0:(h^2\iota^2(\vartheta^2+h\beta)=0)\subset B.
\] 
We then denote the components of $\mathscr{D}_0$ by
\[
O:(h=0), \quad D_1:(\iota^2=0), \quad D_2:(\vartheta^2+h\beta=0).
\]
The notation comes from the fact that physicists refer to $O$ as the `orientifold plane' and the $D_i$ as `D-branes'. We assume that both hypersurfaces given by $h=0$ and $\iota=0$ are smooth and intersect transversally. Now notice that the branes $D_1$ and $D_2$ which arise in the limit admit singularities (as schemes), and in such a case physicists say that the charge associated with the brane is not just $\chi(D_i)$, but $\chi(D_i)-\chi(S_i)$, where the $S_i$ are subvarieties supported on the singular locus of $D_i$ given by
\[
S_1:(\iota=h=0),\quad S_2:(\vartheta=h=\beta=0).
\] 
Note that $S_i$ is the intersection of the singular scheme of $D_i$ with $O$ for both $i=1,2$. In any case, the total D3 chrage $N_{D3}$ on the type-IIB side is given by
\[
N_{D3}=2\chi(O)+2\chi(D_1)-\chi(S_1)+\chi(D_2)-\chi(S_2),
\] 
while the total D3 charge $N_{D3}$ on the F-theory side is given by
\[
N_{D3}=\chi(Y),
\]
so that the tadpole relation predicted by F-theory/type-IIB duality is given by
\begin{equation}\label{trq7}
\chi(Y)\overset{?}=2\chi(O)+2\chi(D_1)-\chi(S_1)+\chi(D_2)-\chi(S_2).
\end{equation}
We now show that \ref{trq7} in fact holds, as we show it is in fact the degree-zero term of
\begin{equation}\label{ciq7}
c_{\text{SM}}\left(\left.{\varphi_t}_*\mathbbm{1}_{\mathscr{Y}}\right|_{Y_t}\right)=c_{\text{SM}}\left({\varphi_0}_*\sigma_F \mathbbm{1}_{\mathscr{Y}}\right)
\end{equation}
as given by Theorem~\ref{mt}, as the family $\mathscr{Y}\to \Delta$ satisfies all the hypotheses of the theorem, with $\pi:\mathbb{P}(\mathscr{E})\to B$ playing the role of $f:X\to B$. For this, first note that
\[
c_{\text{SM}}\left(\left.{\varphi_t}_*\mathbbm{1}_{\mathscr{Y}}\right|_{Y_t}\right)=c_{\text{SM}}(\varphi_*\mathbbm{1}_Y)=\varphi_*c_{\text{SM}}(\mathbbm{1}_Y)=\varphi_*c(Y),
\] 
so that the degree-zero term of the LHS of equation (\ref{ciq7}) indeed coincides with $\chi(Y)$ (since proper pushforwards necessarily preserve terms in degree-zero). As for the RHS of (\ref{ciq7}), we now show that
\begin{equation}\label{rhsci}
c_{\text{SM}}\left({\varphi_0}_*\sigma_F \mathbbm{1}_{\mathscr{Y}}\right)=c_{\text{SM}}\left(2\mathbbm{1}_O+2\mathbbm{1}_{D_1}-\mathbbm{1}_{S_1}+\mathbbm{1}_{D_2}-\mathbbm{1}_{S_2}\right),
\end{equation}
which by Theorem~\ref{mt} yields the Chern class identity
\[
c_{\text{SM}}(\varphi_*\mathbbm{1}_Y)=c_{\text{SM}}\left(2\mathbbm{1}_O+2\mathbbm{1}_{D_1}-\mathbbm{1}_{S_1}+\mathbbm{1}_{D_2}-\mathbbm{1}_{S_2}\right),
\]
from which identity (\ref{trq7}) immediately follows via formula (\ref{ecf}).

To show (\ref{rhsci}), we have to compute
\[
{\varphi_0}_*\sigma_F \mathbbm{1}_{\mathscr{Y}},
\]
thus we now give a precise definition of $\sigma_F \mathbbm{1}_{\mathscr{Y}}$. We use a characterization of $\sigma_F \mathbbm{1}_{\mathscr{Y}}$ given by Aluffi \cite{VSA}, which we state via the following 
\begin{definition}\label{SD} 
Let $\mathcal{Z}\to \mathscr{D}$ be a family over a disk about the origin in $\mathbb{C}$ such that the total space $\mathcal{Z}$ is smooth over $\mathscr{D}\setminus \{0\}$, denote its central fiber by $Z_0$, and let $\psi:\widetilde{\mathcal{Z}}\to \mathcal{Z}$ be a proper birational morphism such that $\widetilde{\mathcal{Z}}$ is smooth, $\mathcal{D}=\psi^{-1}(Z_0)$ is a divisor with normal crossings with smooth components, and $\psi$ restricted to the complement of $\mathcal{D}$ is an isomorphism (such a $\psi$ exists by resolution of singularities). Let $\delta$ be the constructible function on $\mathcal{D}$ given by 
\[
\delta(p)=\begin{cases} m \quad  \text{if $p$ lies on a single component of $\mathcal{D}$ of multiplicity $m$,} \\ 0 \quad \hspace{2mm} \text{otherwise}. \end{cases}
\]   
We then set
\[
\sigma_F \mathbbm{1}_{\mathcal{Z}}={\left.\psi\right|_{\mathcal{D}}}_*\delta,
\]
where ${\left.\psi\right|_{\mathcal{D}}}_*$ denotes the proper pushforward of constrictible functions associated with the restriction of $\psi$ to $\mathcal{D}$ (the definition of proper pushforward appears in (\ref{cfd})).
\end{definition}

We now construct a resolution of singularities $\widetilde{\mathscr{Y}}\to \mathscr{Y}$ satisfying the hypotheses of Definition~\ref{SD} in order to arrive at our associated function $\delta\in F(\widetilde{\mathscr{Y}})$. The singular locus of $\mathscr{Y}$ is the codimension four locus given by
\[
\mathscr{Y}_{\text{sing}}:\left(y=x^2+h=\iota=c=0\right)\subset \mathbb{P}(\mathscr{E})\times \Delta.
\]
As the singularities of $\mathscr{Y}$ are away from $\{z=0\}$, we set $z=1$ and work with a local equation for $\mathscr{Y}$ given by
\[
\mathscr{Y}_{\text{loc}}:\left( yx^2-\beta y^3+2\vartheta y^2+t^2\rho x+hy+t\iota =0\right)\subset \mathbb{A}^2\times B\times \Delta.
\] 
We now blowup $\mathbb{A}^2\times B\times \Delta$ along $\{y=c=0\}$, and work in the chart where the associated pullback is given by
\[
y\mapsto X_1, \quad c\mapsto X_1X_2.
\]
The total transform of $\mathscr{Y}_{\text{loc}}$ is then given by
\[
\widetilde{\mathscr{Y}}_{\text{tot}}:\left(X_1(x^2-\beta X_1^2)+2\vartheta X_1^2+\rho X_1^2X_2^2x+hX_1+\iota X_1X_2=0\right)\subset \widetilde{\mathbb{A}^2\times B\times \Delta},
\]
so that the proper transform of $\mathscr{Y}_{\text{loc}}$ is given by
\[
\widetilde{\mathscr{Y}}_{\text{prop}}:\left(x^2-\beta X_1^2+2\vartheta X_1+\rho X_1X_2^2x+h+\iota X_2=0\right)\subset \widetilde{\mathbb{A}^2\times B\times \Delta}.
\]
It is then straightforward to show that the restriction of the blowup
\[
\widetilde{\mathscr{Y}}_{\text{prop}}\to \mathscr{Y}
\]
is indeed a resolution of singularities satisfying the hypotheses of Definition~\ref{SD}. Now since $c=0$ pulls back to $X_1X_2=0$ under the blowup, the pullback of the cental fiber $Y_0$ is given by
\[
\widetilde{Y_0}:\left(X_1X_2=x^2-\beta X_1^2+2\vartheta X_1+\rho X_1X_2^2x+h+\iota X_2=0\right)\subset \widetilde{\mathbb{A}^2\times B\times \Delta},
\]
which is a divisor with normal crossings with two smooth components given by
\begin{eqnarray*}
\mathcal{D}_1&=&\widetilde{\mathscr{Y}}_{\text{prop}}\cap \{X_1=0\}:(x^2+h+X_2\iota=0)\subset \{X_1=0\} \\
\mathcal{D}_2&=&\widetilde{\mathscr{Y}}_{\text{prop}}\cap \{X_2=0\}:(x^2-\beta X_1^2+2\vartheta X_1 +h=0)\subset \{X_2=0\},  \\
\end{eqnarray*}
which intersect along 
\[
X=\mathcal{D}_1\cap \mathcal{D}_2:\left(x^2+h=0\right)\subset \widetilde{\mathscr{Y}}_{\text{prop}},
\]
which is a smooth double-cover of $B$ ramified over the orientifold plane $O:(h=0)\subset B$. It follows from Definition~\ref{SD} that the constructible function $\delta$ we need to pushforward via the resolution to yield $\sigma_F \mathbbm{1}_{\mathscr{Y}}$ is then given by
\[
\delta=\mathbbm{1}_{\mathcal{D}_1}+\mathbbm{1}_{\mathcal{D}_2}-\mathbbm{1}_{X}.
\]
We then have 
\[
\sigma_F \mathbbm{1}_{\mathscr{Y}}=p_*\delta,
\]
where $p$ denotes the restriction of the resolution to the pullback of the central fiber $\widetilde{Y_0}$. Our goal is then to compute
\[
{\varphi_0}_*\sigma_F \mathbbm{1}_{\mathscr{Y}}={\varphi_0}_*\circ p_*\delta=(\varphi_0\circ p)_*\left(\mathbbm{1}_{\mathcal{D}_1}+\mathbbm{1}_{\mathcal{D}_2}-\mathbbm{1}_{X}\right) \in F(B).
\]
For this, we use the following 
\begin{lemma}\label{l1}
Let $f:Z\to V$ be a proper morphism of varieties and let $\{U_i\}$ be a stratification of $V$ with $U_i$ locally closed such that the fibers are topologically constant on each $U_i$. Denote by $F_i$ the fiber over $U_i$ and write $U_i=V_i\setminus W_i$ with $V_i$ and $W_i$ closed subvarieties of $V$ for each $i$. Then
\[
f_*\mathbbm{1}_{Z}=\sum_i \chi(F_i)\left(\mathbbm{1}_{V_i}-\mathbbm{1}_{W_i}\right).
\] 
\end{lemma}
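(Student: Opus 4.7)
The plan is to verify the identity pointwise on $V$, using the definition of proper pushforward of constructible functions and the stratification hypothesis. Concretely, formula (\ref{cfd}) says that for every $p \in V$,
\[
(f_*\mathbbm{1}_Z)(p) = \chi(f^{-1}(p)),
\]
so the first step is simply to reinterpret the claimed equality as an equality of values of constructible functions at each $p$, and match things up stratum by stratum.

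Next I would use the fact that the strata $\{U_i\}$ partition $V$, so each $p \in V$ lies in a unique $U_i$. The topological constancy of fibers on each stratum then yields $\chi(f^{-1}(p)) = \chi(F_i)$ for $p \in U_i$, from which one reads off the intermediate identity
\[
f_*\mathbbm{1}_Z = \sum_i \chi(F_i)\,\mathbbm{1}_{U_i}
\]
as constructible functions on $V$. Finally, because $U_i = V_i \setminus W_i$ with $W_i \subseteq V_i$ closed (the natural reading of the hypothesis, since otherwise one may harmlessly replace $W_i$ by $V_i \cap W_i$), the indicator functions satisfy $\mathbbm{1}_{U_i} = \mathbbm{1}_{V_i} - \mathbbm{1}_{W_i}$, and substituting this into the previous display delivers the stated formula.

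There is really no hard step here: the lemma is a bookkeeping statement whose only ingredients are the definition of proper pushforward and the topological-triviality hypothesis on strata. The one point worth flagging is that ``topologically constant fibers on $U_i$'' must be strong enough to produce a single well-defined Euler characteristic $\chi(F_i)$ independent of the point $p \in U_i$, but this is immediate from the hypothesis and the homotopy invariance of compactly supported Euler characteristic.
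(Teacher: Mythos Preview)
Your proposal is correct and matches the paper's intent: the paper omits the proof, stating only that it follows directly from the definition of proper pushforward in (\ref{cfd}), and your pointwise verification via $\chi(f^{-1}(p))=\chi(F_i)$ on the stratum $U_i$ together with $\mathbbm{1}_{U_i}=\mathbbm{1}_{V_i}-\mathbbm{1}_{W_i}$ is exactly that direct argument spelled out.
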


We omit the proof as it follows directly from the definition of proper pushforward given in (\ref{cfd}). Thus to compute 
\[
(\varphi_0\circ p)_*\left(\mathbbm{1}_{\mathcal{D}_1}+\mathbbm{1}_{\mathcal{D}_2}-\mathbbm{1}_{X}\right),
\]
we view $\mathcal{D}_1$, $\mathcal{D}_2$ and $X$ as fibrations over $B$ and stratify $B$ into strata over which the fibers of the corresponding fibration are topologically constant.

As for $\mathcal{D}_1$, it may be viewed as a conic fibration over $B$, with fibers being smooth conics over $B\setminus D_1$, disjoint $\mathbb{P}^1$s over $D_1\setminus S_1$, and finally two $\mathbb{P}^1$s meeting at a single point over $S_1$, where the varieties $D_1$ and $S_1$ are given by
\[
D_1:\left(\iota^2=0\right), \quad S_1:(\iota=h=0).
\] 
Now since the Euler characteristic of a smooth conic is 2, two disjoint $\mathbb{P}^1$s is 4, and two $\mathbb{P}^1$s meeting at a point is 3, it follows via Lemma~\ref{l1} that the pushforward of $\mathbbm{1}_{\mathcal{D}_1}$ is given by
\[
\mathbbm{1}_{\mathcal{D}_1}\mapsto 2(\mathbbm{1}_B-\mathbbm{1}_{D_1})+4(\mathbbm{1}_{D_1}-\mathbbm{1}_{S_1})+3\mathbbm{1}_{S_1}=2\mathbbm{1}_B+2\mathbbm{1}_{D_1}-\mathbbm{1}_{S_1}.
\]

As for $\mathcal{D}_2$, it is also a conic fibration over $B$, with fibers being smooth conics over $B\setminus D_2$, two $\mathbb{P}^1$s intersecting at a point over $D_2\setminus S_2$, and a double line over $S_2$, where the varieties $D_2$ and $S_2$ are given by
\[
D_2:(\vartheta^2+\beta h), \quad S_2:(\vartheta=\beta=h=0).
\]
It then follows by computing Euler characteristics of the fibers that the pushforward of $\mathbbm{1}_{\mathcal{D}_2}$ is given by
\[
\mathbbm{1}_{\mathcal{D}_2}\mapsto 2(\mathbbm{1}_B-\mathbbm{1}_{D_2})+3(\mathbbm{1}_{D_2}-\mathbbm{1}_{S_2})+2\mathbbm{1}_{S_1}=2\mathbbm{1}_B+\mathbbm{1}_{D_2}-\mathbbm{1}_{S_2}.
\]

Now finally since $X\to B$ is a double cover ramified over $O:(h=0)\subset B$ we have that the pushforward of $\mathbbm{1}_X$ is given by
\[
\mathbbm{1}_X\mapsto 2\mathbbm{1}_B-\mathbbm{1}_O.
\]
Putting things all together we get
\[
{\varphi_0}_*\sigma_F \mathbbm{1}_{\mathscr{Y}}=2\mathbbm{1}_O+2\mathbbm{1}_{D_1}-\mathbbm{1}_{S_1}+\mathbbm{1}_{D_2}-\mathbbm{1}_{S_2},
\]
thus by Theorem~\ref{mt} we have
\[
c_{\text{SM}}(\varphi_*\mathbbm{1}_Y)=c_{\text{SM}}\left(2\mathbbm{1}_O+2\mathbbm{1}_{D_1}-\mathbbm{1}_{S_1}+\mathbbm{1}_{D_2}-\mathbbm{1}_{S_2}\right),
\]
as desired. We note that this identity which encodes the tadpole relations associated with the weak coupling limit in degree-zero hold over a base $B$ of arbitrary dimension, and without any Calabi-Yau hypothesis on $Y$. We then close with two remarks.

\begin{remark}
We would like to point out a couple of apparent differences between the form of our tadpole relations (and associated Chern class identities) and similar relations derived in the physics literature. In particular, in \cite{EJY}, their presumed tadpole relation takes the form
\[
2\chi(Y)=4\chi(O)+2\chi(\overline{D_1})+\chi(\overline{D}_2)-\chi(\overline{S_2}),
\]
where the overline over a variety denotes its pullback to the double cover $X\to B$, where we recall $X$ is given by
\[
X:(x^2+h=0)\subset \widetilde{Y}_0.
\]
The difference in appearance from the tadpole relation given here, namely
\[
\chi(Y)=2\chi(O)+2\chi(D_1)-\chi(S_1)+\chi(D_2)-\chi(S_2),
\]
comes from the fact that we prefer to state the tadpole relations in terms of subvarieties of the base $B$, rather than subvarieties of $X$, which the physicists prefer to do. Working in the double cover then produces certain factors of two to appear in the physicist's identities which don't appear ours. Another difference in our approach we would like to point out is that while in the physics literature the tadpole relations and associated Chern class identities are first guessed and then verified by explicit computation, Theorem~\ref{mt} yields the identities from first principles, without computing any Chern classes. 
\end{remark}

\begin{remark}
One curious aspect of D3 charge on the type-IIB side is that when an irreducible component $D_i$ of the limiting discriminant different from the orientifold $O$ is singular, the contribution of $D_i$ to the D3 charge is not $\chi(D_i)$ but rather $\chi(D_i)-\chi(S_i)$, where $S_i$ is the singular \emph{scheme} of $D_i$ intersected with $O$. By singular scheme we mean the subscheme of $D_i$ corresponding to the ideal generated by its defining equation and its partial derivatives. For example $D_1$ is given by
\[
D_1:(\iota^2=0)\subset B,
\]
thus its singular scheme is given by 
\[
{D_1}_{\text{sing}}:(\iota=0)\subset B,
\]
so that $S_1:(\iota=h=0)\subset B$ is precisely ${D_1}_{\text{sing}}\cap O$. Ad hoc explanations for negative contributions to the D3 charge coming from singularities were given in the physics literature for the case of $D_2$ \cite{CDM}, but in all examples known to our knowledge, the negative contribution to the D3 charge for a singular brane is precisely the Euler characteristic of its singular scheme intersected with $O$, and moreover this may be incorperated into a general definition of `orientifold Euler characteristic'. In particular, if $D$ is an irreducible component of a limiting discriminant different from the orinetifold plane corresponding to a weak coupling limit of F-theory, we propose to define the orientifold Euler characteristic of $D$ to be
\[
\chi_o(D)=m\chi(D)-\chi(S\cap O),
\]
where $m$ is the multiplicity of $D$ and $S$ denotes its singular scheme. With this definition all known examples of tadpole relations may be written as
\[
\chi(Y)=2\chi(O)+\sum_i \chi_o(D_i).
\]
In any case, Theorem~\ref{mt} yields the proper negative contribution to the D3 charge coming from the singularities of branes without any need for further explanation from the mathematical viewpoint.    
\end{remark}

\noindent\emph{Acknowledgements.} The first author would like to thank the organizers of the conference ``International Conference on Singularity Theory, in Honor of Henry Laufer's 70th Birthday'', which took place in December 2015 at the Tsinghua Sanya International Mathematics Forum in Sanya China, where he got to speak on the subject of this paper. The second author's research is supported by Tianyuan Grant No. 11526046.

\bibliographystyle{plain}
\bibliography{RVS2}

\end{document}